\newtheorem*{main-theorem*}{Theorem}
\newtheorem*{lemma*}{Lemma}
\renewcommand{\mod}{\operatorname{mod}}
\newcommand{\rad}{\operatorname{rad}}
\newcommand{\op}{\operatorname{op}}
\newcommand{\bA}{\mathbb{A}}
\newcommand{\bD}{\mathbb{D}}
\newcommand{\bE}{\mathbb{E}}
\newcommand{\bL}{\mathbb{L}}
\newcommand{\EE}{\mathbb{E}}
\newcommand{\cN}{\mathcal{N}}
\begin{document}

\baselineskip=17pt

\title{Deformed preprojective algebras of Dynkin type $\mathbb{E}_6$}

\author[J. Bia\l kowski]{Jerzy Bia\l kowski}
\address{Faculty of Mathematics and Computer Science,
   Nicolaus Copernicus University,
   Chopina~12/18,
   87-100 Toru\'n,
   Poland}
\email{jb@mat.uni.torun.pl}

\date{}

\subjclass[2010]{Primary 16D50, 16G20; Secondary 16G50}

\keywords{Preprojective algebra, 
Deformed preprojective algebra,
Self-injective algebra,
Periodic algebra}


\begin{abstract}
We prove that every deformed preprojective algebra of Dynkin type $\bE_6$
is isomorphic to the preprojective algebra of Dynkin type $\bE_6$.
\end{abstract}

\maketitle

Throughout this article, $K$ will denote a fixed algebraically 
closed field.
By an algebra we mean an associative finite-dimensional $K$-algebra
with  identity, which we moreover assume to be basic and connected.
For an algebra $A$, we denote by $\mod A$ the category of
finite-dimensional right $A$-modules and by $\Omega_A$ the syzygy
operator which assigns to a module $M$ in $\mod A$ the kernel
of a minimal projective cover $P_A(M) \to M$ of $M$ in $\mod A$.
Then a module $M$ in $\mod A$ is called \emph{periodic} 
if $\Omega_A^n(M) \cong M$ for some $n \geq 1$.
Further, the category of finite-dimensional $A$-$A$-bimodules over
an algebra $A$ is canonically equivalent to the module category
$\mod A^e$ over the enveloping algebra $A^e = A^{\op} \otimes_K A$ of $A$.
Then an algebra $A$ is called a \emph{periodic algebra} 
if $A$ is a periodic module in $\mod A^e$.
It is known that if $A$ is a periodic algebra then is self-injective 
and  
every module $M$ in $\mod A$ without non-zero projective 
direct summands is periodic.
Periodic algebras play currently a prominent r\^ole in  representation
theory of algebras and have attracted much attention (see the survey
article \cite{ESk}).
In particular, it has been proved recently in \cite{Du} that
all self-injective algebras of finite representation type
(different from $K$) are periodic.
We refer also to recent articles \cite{ESk2,ESk3,ESk4}
on the connections of periodic algebras with finite groups
and triangulated surfaces.

In this note we are concerned with the classification 
of deformed preprojective algebras of generalized Dynkin type
$\bA_n (n \geq 2)$, 
$\bD_n (n \geq 4)$, 
$\bE_6$, $\bE_7$, $\bE_8$, 
$\bL_n (n \geq 1)$, 
which are shown in \cite{BES1}
to form the class of all self-injective algebras $A$
for which the third syzygy $\Omega_A^3(S)$ of every non-projective
simple module $S$ in $\mod A$ is isomorphic to its
Nakayama shift $\cN_A(S)$.
We refer to \cite{BES1,ESk},
for results on the importance of these algebras
in the representation theory of self-injective algebras.
The deformed preprojective algebras of generalized Dynkin type 
$\bL_n$ were classified in \cite{BES2},
where it was also shown that they describe 
the stable Auslander algebras of the category of maximal
Cohen-Macaulay modules over simple plane curve singularities
of Dynkin type $\bA_{2n}$.
We refer also to \cite{B} for the classification
of socle deformed preprojective algebras of generalized
Dynkin type.

The main aim of this article is to prove the following theorem 
providing the classification of deformed preprojective
algebras of Dynkin type $\bE_6$.

\begin{main-theorem*}
    Every deformed preprojective algebras of Dynkin type $\bE_6$
    is isomorphic to the preprojective algebras of Dynkin type $\bE_6$.
\end{main-theorem*}

We recall that the 
\emph{preprojective algebra $P(\bE_6)$ of Dynkin type $\bE_6$} 
is given by the quiver
$$
    \begin{array}{c} Q_{\mathbb{E}_{6}}: \\ \end{array}
   \quad \vcenter{
    \xymatrix@C=.8pc@R=2pc{
        && && 0 \ar@<.5ex>[d]^{a_0} \\
        1 \ar@<.5ex>[rr]^{a_1} && 2 \ar@<.5ex>[ll]^{\bar{a}_1} \ar@<.5ex>[rr]^{a_2} &&
        3
         \ar@<.5ex>[ll]^{\bar{a}_2} \ar@<.5ex>[rr]^{a_3} \ar@<.5ex>[u]^{\bar{a}_0} &&
         4 \ar@<.5ex>[ll]^{\bar{a}_3} \ar@<.5ex>[rr]^{a_4} &&
         5 \ar@<.5ex>[ll]^{\bar{a}_4} \\
    }
   }
$$
and the relations
\begin{gather*}
  a_0 \bar{a}_0 = 0, 
  \quad 
  a_1 \bar{a}_1 = 0, 
  \quad 
 \bar{a}_1 a_1 + {a}_2 \bar{a}_2 = 0,
  \quad  
 \bar{a}_0 a_0 + \bar{a}_2 a_2 + a_3 \bar{a}_3 = 0,
 \\
 \bar{a}_3 a_3 + {a}_4 \bar{a}_4 = 0,
  \quad 
  \bar{a}_4 {a}_4 = 0.  
\end{gather*}
We note that $P(\bE_6)$ is not a weakly symmetric algebra, 
and hence not a symmetric algebra.
Further, consider the local commutative algebra
$$
   R(\bE_6) = K \langle x, y \rangle /
   \left( x^2, y^3, (x+y)^{3} \right) 
$$
which is isomorphic to the algebra $e_0 P(\bE_6) e_0$, where $e_0$
is the primitive idempotent in $P(\bE_6)$ associated to the vertex
$0$ of $Q_{\bE_6}$.
An element $f$ from the square $\rad^2 R(\bE_6)$ of the radical
$\rad \ R(\bE_6)$ of $R(\bE_6)$ 
is said to be \textit{admissible}
if $f$ satisfies the following condition
$$\big(x+y + f(x,y)\big)^{3} = 0.$$
Let $f \in {\rm rad}^2R(\bE_6)$ be admissible.
We denote by $P^f(\bE_6)$ 
the algebra  given by the quiver $Q_{\bE_6}$ and the relations
\begin{gather*}
  a_0 \bar{a}_0 = 0, 
  \!\!\quad 
  a_1 \bar{a}_1 = 0, 
  \!\!\quad 
 \bar{a}_1 a_1 + {a}_2 \bar{a}_2 = 0,
  \!\!\quad  
 \bar{a}_3 a_3 + {a}_4 \bar{a}_4 = 0,
  \!\!\quad 
  \bar{a}_4 {a}_4 = 0,  
  \\
 \bar{a}_0 a_0 + \bar{a}_2 a_2 + a_3 \bar{a}_3 
  + f(\bar{a}_0 a_0, \bar{a}_2 a_2) = 0,
  \quad 
  (\bar{a}_0 a_0 + \bar{a}_2 a_2)^3 = 0.
\end{gather*}
Then $P^f(\bE_6)$ is called a 
\emph{deformed preprojective algebra of Dynkin type ${\bE_6}$} 
(see \cite[Section~7]{ESk}).
Observe that $P^f(\bE_6)$ is obtained from $P(\bE_6)$
by deforming the relation at the exceptional vertex $0$ of $Q_{\bE_6}$,
and $P^f(\bE_6) = P(\bE_6)$ if $f = 0$.

The following lemma describes 
the 
admissible elements of ${\rm rad}^2R(\bE_6)$.

\begin{lemma*}
\label{lem:warunek}
An element $f$ from  ${\rm rad}^2R(\bE_6)$ is admissible if and only if
\begin{align*}
 f(x,y) 
  &= 
  \theta_1 
  xy   
  +
  \theta_2 
  yx
  +
  \theta_3 
  yy
  +
  \theta_4 
  xyx
  +
  \theta_5 
  xyy
  +
  \theta_6 
  yxy
  +
  \theta_7 
  xyxy
 \\&
 \quad
  +
  \theta_8 
  yxyy
  +
  \theta_9 
  xyxyy
  ,
\end{align*}
for some $\theta_1, \dots, \theta_9 \in K$
satisfying 
$\theta_2 = 2 \theta_3 - \theta_1$
and
$\theta_6 = 2 \theta_5 - 3 \theta_4 - 3(\theta_3 - \theta_1)^2$.
\end{lemma*}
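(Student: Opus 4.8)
The plan is to compute directly inside $R := R(\bE_6) = K\langle x, y\rangle/(x^2, y^3, (x+y)^3)$, after first making its multiplicative structure completely explicit. Expanding $(x+y)^3$ modulo $x^2$ and $y^3$ shows that $R$ is the quotient of the free algebra $K\langle x,y\rangle$ by $x^2$, $y^3$ and the single homogeneous cubic relation $xyx + xyy + yxy + yyx = 0$. Multiplying this relation by $x$ or $y$ on either side and iterating yields all remaining reductions: in degree $4$, $xyyx = -xyxy = -yxyx$ and $yyxy = -xyxy - yxyy$; in degree $5$, every monomial is a scalar multiple of $xyxyy$ (for instance $xyxyx = 0$, $yxyxy = xyxyy$, $xyyxy = -xyxyy$, $yyxyy = -xyxyy$); and in degree $6$ everything vanishes, so $\rad^6 R = 0$. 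Consequently the nine monomials $xy, yx, yy, xyx, xyy, yxy, xyxy, yxyy, xyxyy$ form a $K$-basis of $\rad^2 R$ (the Hilbert series of $R$ being $1 + 2t + 3t^2 + 3t^3 + 2t^4 + t^5$), so every $f \in \rad^2 R$ automatically has the displayed shape $f = \sum_{i=1}^{9}\theta_i m_i$; the content of the lemma is the precise constraint on the $\theta_i$.

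Next, write $f = f_2 + f_3 + f_4 + f_5$ with $f_j$ the homogeneous component of degree $j$, and set $u := x + y$, so $u^3 = 0$ in $R$. Expanding $(u+f)^3$ and discarding everything of degree $\geq 6$, the surviving terms have degree $3$, $4$, or $5$; they come respectively from $u^3$, from the arrangements of $(u,u,f_2)$, and from the arrangements of $(u,u,f_3)$ and of $(u,f_2,f_2)$, so
\[
  (u+f)^3 = u^3 + \bigl(u^2 f_2 + u f_2 u + f_2 u^2\bigr) + \bigl(u^2 f_3 + u f_3 u + f_3 u^2 + u f_2^2 + f_2 u f_2 + f_2^2 u\bigr),
\]
the three bracketed terms being homogeneous of degrees $3, 4, 5$. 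Since $u^3 = 0$, admissibility of $f$ is equivalent to the simultaneous vanishing of $E_4 := u^2 f_2 + u f_2 u + f_2 u^2$ and $E_5 := u^2 f_3 + u f_3 u + f_3 u^2 + u f_2^2 + f_2 u f_2 + f_2^2 u$. In particular $f_4$ and $f_5$ never enter, so $\theta_7, \theta_8, \theta_9$ are unconstrained.

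It then remains to evaluate $E_4$ and $E_5$ in the basis of $R$. The degree-$4$ part of $R$ is spanned by $xyxy$ and $yxyy$; expanding $E_4$ for $f_2 = \theta_1 xy + \theta_2 yx + \theta_3 yy$ with the reduction rules, the $yxyy$-component cancels and one gets $E_4 = (\theta_1 + \theta_2 - 2\theta_3)\,xyxy$, so $E_4 = 0$ is equivalent to $\theta_2 = 2\theta_3 - \theta_1$. The degree-$5$ part of $R$ is spanned by $xyxyy$, so $E_5 = 0$ is a single scalar equation: the linear part $u^2 f_3 + u f_3 u + f_3 u^2$ evaluates to $(3\theta_4 - 2\theta_5 + \theta_6)\,xyxyy$ and the quadratic part $u f_2^2 + f_2 u f_2 + f_2^2 u$ to $(\theta_1^2 - \theta_1\theta_2 + \theta_2^2 - \theta_3^2)\,xyxyy$; substituting $\theta_2 = 2\theta_3 - \theta_1$ turns the quadratic part into $3(\theta_3 - \theta_1)^2\,xyxyy$, whence $E_5 = 0$ reads $\theta_6 = 2\theta_5 - 3\theta_4 - 3(\theta_3 - \theta_1)^2$. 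Combined with the condition on $\theta_2$ and the freedom of $\theta_7, \theta_8, \theta_9$, this is exactly the statement of the lemma.

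The only real work is the bookkeeping in the last step: establishing every degree-$4$ and degree-$5$ monomial reduction and then carefully expanding the six non-commuting triple products in $E_5$, especially the three quadratic terms $u f_2^2$, $f_2 u f_2$, $f_2^2 u$. This is routine but error-prone; organizing the computation monomial-by-monomial in $\theta_1, \theta_2, \theta_3$ and exploiting that the answers land in one-dimensional spaces keeps it manageable.
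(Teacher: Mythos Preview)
Your proof is correct and follows essentially the same approach as the paper: identify a monomial basis of $R(\bE_6)$ (hence of $\rad^2 R(\bE_6)$), write $f$ in this basis, expand $(x+y+f)^3$ directly, and read off the two scalar conditions $\theta_1+\theta_2-2\theta_3=0$ and $3\theta_4-2\theta_5+\theta_6+\theta_1^2-\theta_1\theta_2+\theta_2^2-\theta_3^2=0$. Your organization via the homogeneous decomposition $f=f_2+f_3+f_4+f_5$ and the observation $\rad^6 R=0$ (making $\theta_7,\theta_8,\theta_9$ visibly free and reducing to the two graded pieces $E_4,E_5$) is a bit cleaner than the paper's presentation, but the substance is the same.
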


\begin{proof}
We claim that
$B = \{1_K,x,y,
  xy,   
  yx,
  yy,
  xyx,
  xyy,
  yxy,
  xyxy,
  yxyy,
  xyxyy\}$
is a basis of $R(\EE_6)$ over $K$.
Indeed, it is easy to see, by induction on the degree of elements from $R(\EE_6)$,
that each element $\omega \in R(\EE_6)$
which is a multiplication of elements $x$ and $y$
is a  linear combination (possibly trivial)
of elements from $B$.
In particular we have:
\begin{align*}
  yyx &= yyx - (x+y)^3 = - ( xyx + xyy + yxy), \\
  yxyx &= - xyyx = xyxy, \\
  yyxy &= - (xyxy + yxyy), \\
  yyxyy &= -yyxyx = yxyyx = - yxyxy = xyyxy = - xyxyy.
\end{align*}
Let $f \in {\rm rad}^2R(\EE_6)$.
Then 
\begin{align*}
 f(x,y) 
  &= 
  \theta_1 
  xy   
  +
  \theta_2 
  yx
  +
  \theta_3 
  yy
  +
  \theta_4 
  xyx
  +
  \theta_5 
  xyy
  +
  \theta_6 
  yxy
  +
  \theta_7 
  xyxy
 \\&
 \quad
  +
  \theta_8 
  yxyy
  +
  \theta_9 
  xyxyy
\end{align*}
for some $\theta_1, \dots, \theta_9 \in K$.
Then we have
\begin{align*}
\big(x+y + f(x,y)\big)^3 
  &=
 \big(x+y 
  +
   \theta_1 
  xy   
  +
  \theta_2 
  yx
  +
  \theta_3 
  yy
  +
  \theta_4 
  xyx
  +
  \theta_5 
  xyy
  +
  \theta_6 
  yxy
 \big)^3
 \\
  &=
 \big(x+y 
  +
   \theta_1 
  xy   
  +
  \theta_2 
  yx
  +
  \theta_3 
  yy
 \big)^3
  +
 \big(x+y 
  +
  \theta_4 
  xyx
 \big)^3
 \\&\quad
  +
 \big(x+y 
  +
  \theta_5 
  xyy
 \big)^3
  +
 \big(x+y 
  +
  \theta_6 
  yxy
 \big)^3
 = \dots
 \\
  &=
  (\theta_1+\theta_2-2\theta_3)xyxy
 \\&\quad
  +
  (3\theta_4-2\theta_5+\theta_6+\theta_1^2-\theta_1 \theta_2+\theta_2^2-\theta_3^2)xyxyy
\end{align*}
Hence $f$ is admissible, if and only if
$\theta_1+\theta_2-2\theta_3 = 0$
and 
$3\theta_4-2\theta_5+\theta_6+\theta_1^2-\theta_1 \theta_2+\theta_2^2-\theta_3^2 = 0$.
Moreover, if $\theta_1+\theta_2-2\theta_3 = 0$,
then $\theta_2 = 2 \theta_3 - \theta_1$,
and hence
\begin{align*}
3\theta_4-2\theta_5+\theta_6&+\theta_1^2-\theta_1 \theta_2+\theta_2^2-\theta_3^2 
\\&
= 
3\theta_4-2\theta_5+\theta_6+\theta_1^2-\theta_1 (2 \theta_3 - \theta_1)+(2 \theta_3 - \theta_1)^2-\theta_3^2
\\&
=
3\theta_4-2\theta_5+\theta_6+3\theta_1^2-6\theta_1 \theta_3 +3\theta_3^2
\\&
= \theta_6 - \big( 2 \theta_5 - 3 \theta_4 - 3(\theta_3 - \theta_1)^2\big).
\end{align*}

This ends the proof.
\end{proof}

The remaining part of this article 
is devoted to the proof of Theorem.

\medskip

Let $f$ be an admissible element of ${\rm rad}^2 R(\EE_6)$.
We will show that the algebras
$P(\EE_6)$ and $P^f(\EE_6)$
are isomorphic.
This will be done via a change of generators in $P(\EE_6)$.
%
%
It follows 
from 
Lemma 
that
there exist $\theta_1, \dots, \theta_9 \in K$,
satisfying 
$\theta_2 = 2 \theta_3 - \theta_1$
and
$\theta_6 = 2 \theta_5 - 3 \theta_4 - 3(\theta_3 - \theta_1)^2$
such that
\begin{align*}
 f(x,y) 
  &= 
  \theta_1 
  xy   
  +
  \theta_2 
  yx
  +
  \theta_3 
  yy
  +
  \theta_4 
  xyx
  +
  \theta_5 
  xyy
  +
  \theta_6 
  yxy
  +
  \theta_7 
  xyxy
 \\&
 \quad
  +
  \theta_8 
  yxyy
  +
  \theta_9 
  xyxyy .
\end{align*}
To simplify the notation,
we denote
\begin{align*}
  \alpha &= \theta_4 + (\theta_3 - \theta_1)^2 ,\\
  \beta &= \theta_5 - 2 \theta_4 - 2(\theta_3 - \theta_1)^2 ,\\
  \gamma &=
\theta_7
-8\theta_1\theta_3^2
+7\theta_1^2\theta_3
+2\theta_3\theta_4
-2\theta_1^3
-2\theta_1\theta_4
+3\theta_3^3 ,
\\
  \delta &=
2 \theta_1^4 
-6\theta_1^3\theta_3 
-3\theta_1^2\theta_5 
+4\theta_1^2\theta_4 
+6\theta_1^2\theta_3^2 
+5\theta_1\theta_3\theta_5 
-6\theta_1\theta_3\theta_4 
+\theta_5^2
\\
&\quad
-3\theta_5\theta_4
+2\theta_4^2
-2\theta_3^3\theta_1
-2\theta_3^2\theta_5
+2\theta_3^2\theta_4
+2\theta_1\theta_8
-3\theta_3\theta_8
-\theta_9 ,
\\
  \alpha_1 &= - \alpha  + \theta_1 \theta_3 - \theta_3^2 ,\\
  \beta_1 &= - \beta + \theta_1 \theta_3 - \theta_3^2 ,\\
  \alpha_2 &= - \gamma - \theta_3^2 (\theta_2 - \theta_3) 
              + \theta_1 \beta + \theta_1 \alpha_1 - \theta_3 \alpha_1 ,\\
  \beta_2 &= - \theta_3^2 (\theta_1 - \theta_3) 
              + \theta_3 \alpha + \theta_3 \beta_1 + \theta_3 \beta ,\\
  \alpha_3 &= 
    - \theta_3^2 (\theta_1^2+\theta_2^2+2\theta_4-2\theta_5+\theta_6-\theta_1\theta_2-\theta_2\theta_3) 
\\&\quad
    + \alpha_1 \theta_3^2 (\theta_1 - \theta_3)            
    + \beta \theta_3^2 (\theta_2 - \theta_3)            
    + \theta_3 \theta_8 
\\&\quad
    - \theta_3 \alpha_2 
    - \theta_3 \beta_2 
    - \alpha \alpha_1 
    + \beta \alpha_1 
    - \beta \beta_1 
    - \gamma \theta_3 .
\end{align*}

Now we change generators in $P(\EE_6)$.
We replace 
$a_i$ by $a_i' \in P(\EE_6)$ 
and 
$\bar{a}_i$ by $\bar{a}_i' \in P(\EE_6)$,
for $i \in \{2,3,4\}$, 
defined as follows
\begin{align*}
  a'_2 &= a_2 - \theta_8 a_2 \bar{a}_0 a_0 \bar{a}_2 a_2 \bar{a}_2 a_2 ,
\\
  \bar{a}'_2 &= \bar{a}_2 + 
   \delta
    \bar{a}_0 a_0 \bar{a}_2 a_2 \bar{a}_0 a_0 \bar{a}_2 a_2 \bar{a}_2 ,
\\
  a'_3 &= {a}_3 
   + \theta_1 \bar{a}_0 a_0  {a}_3
   + \theta_3 \bar{a}_2 a_2  {a}_3
   + 
   \alpha
   \bar{a}_0 a_0 \bar{a}_2 a_2  {a}_3
   + 
   \beta
   \bar{a}_2 a_2 \bar{a}_0 a_0  {a}_3
\\
&\quad
   + 
   \gamma
   \bar{a}_0 a_0 \bar{a}_2 a_2 \bar{a}_0 a_0  {a}_3 ,
\\
  \bar{a}'_3 &= \bar{a}_3 + (\theta_3 - \theta_1) \bar{a}_3 \bar{a}_0 a_0
    + (\theta_4 - \theta_5 - \theta_1 \theta_3 + \theta_1^2) \bar{a}_3 \bar{a}_2 a_2 \bar{a}_0 a_0  ,
\\
  a'_4 &= {a}_4 
   + \big((\theta_1 - \theta_3) (2\theta_3 - \theta_1) - \theta_4\big) \bar{a}_3 \bar{a}_0 a_0  {a}_3 {a}_4
\\
&\quad
   + \big(
   3\theta_1\theta_3^2-\theta_3\theta_4-\theta_7-2\theta_1^2\theta_3-\theta_3^3+\theta_1\theta_5
   \big) 
   \bar{a}_3 \bar{a}_2 a_2 \bar{a}_0 a_0 {a}_3 {a}_4 ,
\\
  \bar{a}'_4 &= \bar{a}_4 
   + \big(\theta_4 - (\theta_1 - \theta_3) (2\theta_3 - \theta_1)\big) \bar{a}_4 \bar{a}_3 \bar{a}_0 a_0  {a}_3 .
\end{align*}
and keep all other arrows as they are.
Then
\begin{align*}
  a_2 &= a'_2 + \theta_8 a'_2 \bar{a}_0 a_0 \bar{a}'_2 a'_2 \bar{a}'_2 a'_2 ,
\\
  \bar{a}_2 &= \bar{a}'_2 - 
    \delta
    \bar{a}_0 a_0 \bar{a}'_2 a'_2 \bar{a}_0 a_0 \bar{a}'_2 a'_2 \bar{a}'_2
\\
  a_3 &= a'_3 
   - \theta_1 \bar{a}_0 a_0  a'_3
   - \theta_3 \bar{a}'_2 a'_2  a'_3
   + \alpha_1
      \bar{a}_0 a_0 \bar{a}'_2 a'_2  a'_3
   + \beta_1
      \bar{a}'_2 a_2 \bar{a}_0 a_0  a'_3
\\
&\quad
   + \alpha_2
    \bar{a}_0 a_0 \bar{a}'_2 a'_2 \bar{a}_0 a_0  a'_3
   + \beta_2
    \bar{a}'_2 a'_2 \bar{a}_0 a_0 \bar{a}'_2 a'_2  a'_3
   + \alpha_3
    \bar{a}_0 a_0 \bar{a}'_2 a'_2 \bar{a}_0 a_0 \bar{a}'_2 a'_2  a'_3 .
\\
  \bar{a}_3 &= \bar{a}'_3 - (\theta_3 - \theta_1) \bar{a}'_3 \bar{a}_0 a_0
    - (\theta_4 - \theta_5 - \theta_1 \theta_3 + \theta_1^2) \bar{a}'_3 \bar{a}'_2 a'_2 \bar{a}_0 a_0  
\\ &\quad
    + (\theta_3 - \theta_1)
    (\theta_4 - \theta_5 - \theta_1 \theta_3 + \theta_1^2)
     \bar{a}'_3 \bar{a}_0 a_0 \bar{a}'_2 a'_2 \bar{a}_0 a_0  
\\ &\quad
    +
    (\theta_4 - \theta_5 - \theta_1 \theta_3 + \theta_1^2)^2
     \bar{a}'_3 \bar{a}'_2 a'_2 \bar{a}_0 a_0 \bar{a}'_2 a'_2 \bar{a}_0 a_0  ,
\\
  a_4 &= a'_4 a'_2
   - \big((\theta_1 - \theta_3) (2\theta_3 - \theta_1) - \theta_4\big) \bar{a}'_3 \bar{a}_0 a_0  a'_3 a'_4
\\
&\quad
   - \big(
   3\theta_1\theta_3^2-\theta_3\theta_4-\theta_7-2\theta_1^2\theta_3-\theta_3^3+\theta_1\theta_5
   \big) 
   \bar{a}'_3 \bar{a}'_2 a'_2 \bar{a}_0 a_0 a'_3 a'_4 ,
\\
  \bar{a}_4 &= \bar{a}'_4 
   - \big(\theta_4 - (\theta_1 - \theta_3) (2\theta_3 - \theta_1)\big) 
     \bar{a}'_4 \bar{a}'_3 \bar{a}_0 a_0  a'_3 
\\
&\quad
   + \theta_3 \big(\theta_4 - (\theta_1 - \theta_3) (2\theta_3 - \theta_1)\big) 
     \bar{a}'_4 \bar{a}'_3 \bar{a}_0 a_0 \bar{a}'_2 a'_2 a'_3 .
\end{align*}
Therefore this is an invertible change 
of generators.

We will show now that, with these new generators, 
$P(\EE_6)$ satisfies the relations of $P^f(\EE_6)$.

From the equalities 
$a_2 \bar{a}_2 a_2 \bar{a}_2 = a_2 a_1 \bar{a}_1 \bar{a}_2 = 0$
we obtain
$a_2' \bar{a}'_2 = a_2 \bar{a}_2$,
and hence
\begin{align*}
\bar{a}_1 a_1 + a_2' \bar{a}'_2 
&=  \bar{a}_1 a_1 +  a_2 \bar{a}_2 
= 0 .
\end{align*}
Similarly, from 
$\bar{a}_4 {a}_4 = 0$,
we obtain 
\begin{align*}
\bar{a}'_4 {a}'_4 
&
= 0. 
\end{align*}

Note that from equalities 
$a_4 \bar{a}_4 + \bar{a}_3 a_3 = 0$,
$a_0 \bar{a}_0 = 0$,
$a_2 \bar{a}_2 a_2 \bar{a}_2 = 0$,
$\bar{a}_3 {a}_3 \bar{a}_3 {a}_3 = 0$, and 
${a}_3 \bar{a}_3 + \bar{a}_2 a_2 + \bar{a}_0 a_0 = 0$
we have
\begin{align*}
a_4 \bar{a}_4 \bar{a}_3 \bar{a}_0 a_0  {a}_3
&= - \bar{a}_3 a_3 \bar{a}_3 \bar{a}_0 a_0  {a}_3
= \bar{a}_3 \bar{a}_2 a_2 \bar{a}_0 a_0  {a}_3,\\
\bar{a}_3 \bar{a}_0 a_0  {a}_3 a_4 \bar{a}_4 
&= - \bar{a}_3 \bar{a}_0 a_0  {a}_3 \bar{a}_3 a_3
= \bar{a}_3 \bar{a}_0 a_0 \bar{a}_2 a_2 {a}_3,\\
\bar{a}_3 \bar{a}_2 a_2 \bar{a}_0 a_0  {a}_3 a_4 \bar{a}_4 
&= - \bar{a}_3 \bar{a}_2 a_2 \bar{a}_0 a_0  {a}_3 \bar{a}_3 a_3
= \bar{a}_3 \bar{a}_2 a_2 \bar{a}_0 a_0 \bar{a}_2 a_2 {a}_3, \\
\bar{a}_3 \bar{a}_2 a_2 \bar{a}_0 a_0 \bar{a}_2 a_2 {a}_3 
&= - \bar{a}_3 \bar{a}_2 a_2 \bar{a}_0 a_0  {a}_3 \bar{a}_3 a_3
= \bar{a}_3 \bar{a}_2 a_2 \bar{a}_2 a_2 {a}_3 \bar{a}_3 a_3
\\&
= - \bar{a}_3 \bar{a}_2 a_2 \bar{a}_2 a_2 \bar{a}_0 a_0 a_3
= \bar{a}_3 \bar{a}_2 a_2 {a}_3 \bar{a}_3 \bar{a}_0 a_0 a_3
\\&
= \bar{a}_3 \bar{a}_0 a_0 {a}_3 \bar{a}_3 \bar{a}_0 a_0 a_3
= \bar{a}_3 \bar{a}_0 a_0 \bar{a}_2 a_2 \bar{a}_0 a_0 a_3 .
\end{align*}
Therefore we obtain equalities
\begin{align*}
\bar{a}'_3 a'_3
&= \bar{a}_3 a_3
+ \big((\theta_3 - \theta_1) + \theta_1 - \theta_3 \big) \bar{a}_3 \bar{a}_0 a_0 a_3
\\ &\quad
+ \big(\big(\theta_4 + (\theta_3 - \theta_1)^2\big) + (\theta_3 - \theta_1) \theta_3 \big) \bar{a}_3 \bar{a}_0 a_0 \bar{a}_2 a_2  a_3 
\\ &\quad
+ \big((\theta_4 - \theta_5 - \theta_1 \theta_3 + \theta_1^2) 
\\ &\ \qquad
+ \big(\theta_5 - 2 \theta_4 - 2(\theta_3 - \theta_1)^2\big) \big) \bar{a}_3 \bar{a}_2 a_2 \bar{a}_0 a_0  a_3 
\\ &\quad
+ (\theta_3 - \theta_1) \big(\theta_5 - 2 \theta_4 - 2(\theta_3 - \theta_1)^2\big)
\bar{a}_3 \bar{a}_0 a_0 \bar{a}_2 a_2 \bar{a}_0 a_0  {a}_3
\\ &\quad
+ \theta_3  (\theta_4 - \theta_5 - \theta_1 \theta_3 + \theta_1^2)
\bar{a}_3 \bar{a}_2 a_2 \bar{a}_0 a_0 \bar{a}_2 a_2  {a}_3
\\ &\quad
+ \big(
\theta_7
-8\theta_1\theta_3^2
+7\theta_1^2\theta_3
+2\theta_3\theta_4
-2\theta_1^3
-2\theta_1\theta_4
\\ &\ \qquad
+3\theta_3^3   
\big) \bar{a}_3 \bar{a}_0 a_0 \bar{a}_2 a_2 \bar{a}_0 a_0  {a}_3
\\
\qquad
&= \bar{a}_3 a_3
+ \big(\theta_4 + \theta_1^2 + 2\theta_3^2 - 3 \theta_1 \theta_3 \big) 
( \bar{a}_3 \bar{a}_0 a_0 \bar{a}_2 a_2  a_3 -  \bar{a}_3 \bar{a}_2 a_2 \bar{a}_0 a_0  a_3)
\\ &\quad
+ (
\theta_5 \theta_3 - 2 \theta_4 \theta_3 - 2\theta_3^3 + 4 \theta_3^2 \theta_1 - 2\theta_1^2 \theta_3 
- \theta_5 \theta_1 + 2 \theta_4 \theta_1 + 2\theta_3^2\theta_1 
\\ &\ \qquad
- 4 \theta_1^2 \theta_3 
+ 2\theta_1^3
+ \theta_3 \theta_4 - \theta_3 \theta_5 - \theta_1 \theta_3^2 + \theta_1^2 \theta_3
+
\theta_7
-8\theta_1\theta_3^2
\\ &\ \qquad
+7\theta_1^2\theta_3
+2\theta_3\theta_4
-2\theta_1^3
-2\theta_1\theta_4
+3\theta_3^3   
) 
\bar{a}_3 \bar{a}_0 a_0 \bar{a}_2 a_2 \bar{a}_0 a_0  {a}_3
\\
\qquad
&= \bar{a}_3 a_3
+ \big(\theta_4 + \theta_1^2 + 2\theta_3^2 - 3 \theta_1 \theta_3 \big) 
( \bar{a}_3 \bar{a}_0 a_0 \bar{a}_2 a_2  a_3 -  \bar{a}_3 \bar{a}_2 a_2 \bar{a}_0 a_0  a_3)
\\ &\quad
+ (
\theta_7
+ \theta_1^3
+\theta_3\theta_4
-\theta_1\theta_5
-3\theta_1\theta_3^2
+2\theta_1^2\theta_3
) 
\bar{a}_3 \bar{a}_0 a_0 \bar{a}_2 a_2 \bar{a}_0 a_0  {a}_3
\end{align*}
and
\begin{align*}
a'_4 \bar{a}'_4 
&=  a_4 \bar{a}_4 
+ \big((\theta_1 - \theta_3) (2\theta_3 - \theta_1) - \theta_4\big) \bar{a}_3 \bar{a}_0 a_0  {a}_3 {a}_4 \bar{a}_4
\\ &\quad
+ \big(\theta_4 - (\theta_1 - \theta_3) (2\theta_3 - \theta_1)\big) {a}_4 \bar{a}_4 \bar{a}_3 \bar{a}_0 a_0  {a}_3 
\\ &\quad
+ \big(
3\theta_1\theta_3^2-\theta_3\theta_4-\theta_7-2\theta_1^2\theta_3-\theta_3^3+\theta_1\theta_5
\big) 
\bar{a}_3 \bar{a}_2 a_2 \bar{a}_0 a_0 {a}_3 {a}_4 \bar{a}_4
\\
&=  a_4 \bar{a}_4 
+ (3 \theta_1 \theta_3 - \theta_1^2 - 2\theta_3^2 - \theta_4 \big) 
\bar{a}_3 \bar{a}_0 a_0 \bar{a}_2 a_2  a_3 
\\ &\quad
+ (\theta_4 + \theta_1^2 + 2\theta_3^2 - 3 \theta_1 \theta_3 \big) 
\bar{a}_3 \bar{a}_2 a_2 \bar{a}_0 a_0  a_3
\\ &\quad
+ \big(
3\theta_1\theta_3^2-\theta_3\theta_4-\theta_7-2\theta_1^2\theta_3-\theta_3^3+\theta_1\theta_5
\big) 
\bar{a}_3 \bar{a}_0 a_0 \bar{a}_2 a_2 \bar{a}_0 a_0  {a}_3
\end{align*}
and hence
\begin{align*}
\bar{a}'_3 a'_3 + a'_4 \bar{a}'_4 
&=  a_3 \bar{a}_3 +  a_4 \bar{a}_4 
= 0 . 
\end{align*}

We note that 
we have the equalities
\begin{align*}
\bar{a}_0 a_0 {a}_3 \bar{a}_3 & = - \bar{a}_0 a_0 \bar{a}_2 a_2 , \\
\bar{a}_2 a_2 {a}_3 \bar{a}_3 & = - \bar{a}_2 a_2 \bar{a}_2 a_2 - \bar{a}_2 a_2 \bar{a}_0 a_0 , \\
{a}_3 \bar{a}_3 \bar{a}_0 a_0 & = - \bar{a}_2 a_2 \bar{a}_0 a_0 , \\
\bar{a}_0 a_0 \bar{a}_2 a_2 {a}_3 \bar{a}_3
& = - (\bar{a}_0 a_0 \bar{a}_2 a_2 \bar{a}_0 a_0 + \bar{a}_0 a_0 \bar{a}_2 a_2 \bar{a}_2 a_2) , \\
\bar{a}_2 a_2 \bar{a}_0 a_0 {a}_3 \bar{a}_3 
& = - \bar{a}_2 a_2 \bar{a}_0 a_0 \bar{a}_2 a_2 , \\
\bar{a}_0 a_0 {a}_3 \bar{a}_3 \bar{a}_0 a_0 
& = - \bar{a}_0 a_0 \bar{a}_2 a_2 \bar{a}_0 a_0 ,
\end{align*}
\begin{align*}
\bar{a}_2 a_2 {a}_3 \bar{a}_3 \bar{a}_0 a_0 
& = - \bar{a}_2 a_2 \bar{a}_2 a_2 \bar{a}_0 a_0 
\\ &
= \bar{a}_0 a_0 \bar{a}_2 a_2 \bar{a}_0 a_0 + \bar{a}_0 a_0 \bar{a}_2 a_2 \bar{a}_2 a_2
+ \bar{a}_2 a_2 \bar{a}_0 a_0 \bar{a}_2 a_2 , \\
{a}_3 \bar{a}_3 \bar{a}_2 a_2 \bar{a}_0 a_0 
& = - (\bar{a}_0 a_0 \bar{a}_2 a_2 \bar{a}_0 a_0 + \bar{a}_2 a_2 \bar{a}_2 a_2 \bar{a}_0 a_0) 
\\ &
= \bar{a}_0 a_0 \bar{a}_2 a_2 \bar{a}_2 a_2 + \bar{a}_2 a_2 \bar{a}_0 a_0 \bar{a}_2 a_2 , \\
%
\bar{a}_0 a_0 \bar{a}_2 a_2 \bar{a}_0 a_0 {a}_3 \bar{a}_3 
& = - \bar{a}_0 a_0 \bar{a}_2 a_2 \bar{a}_0 a_0 \bar{a}_2 a_2 , \\
\bar{a}_0 a_0 \bar{a}_2 a_2 {a}_3 \bar{a}_3 \bar{a}_0 a_0 
& = - \bar{a}_0 a_0 \bar{a}_2 a_2 \bar{a}_2 a_2 \bar{a}_0 a_0
= \bar{a}_0 a_0 \bar{a}_2 a_2 \bar{a}_0 a_0 \bar{a}_2 a_2 , \\
\bar{a}_2 a_2 \bar{a}_0 a_0 {a}_3 \bar{a}_3 \bar{a}_0 a_0 
& = - \bar{a}_2 a_2 \bar{a}_0 a_0 \bar{a}_2 a_2 \bar{a}_0 a_0
= - \bar{a}_0 a_0 \bar{a}_2 a_2 \bar{a}_0 a_0 \bar{a}_2 a_2 , \\
\bar{a}_0 a_0 {a}_3 \bar{a}_3 \bar{a}_2 a_2 \bar{a}_0 a_0 
& = - \bar{a}_0 a_0 \bar{a}_2 a_2 \bar{a}_2 a_2 \bar{a}_0 a_0
= \bar{a}_0 a_0 \bar{a}_2 a_2 \bar{a}_0 a_0 \bar{a}_2 a_2 , \\
\bar{a}_2 a_2 {a}_3 \bar{a}_3 \bar{a}_2 a_2 \bar{a}_0 a_0 
& = - \bar{a}_2 a_2 \bar{a}_0 a_0 \bar{a}_2 a_2 \bar{a}_0 a_0
= - \bar{a}_0 a_0 \bar{a}_2 a_2 \bar{a}_0 a_0 \bar{a}_2 a_2 , \\
\bar{a}_0 a_0 \bar{a}_2 a_2 \bar{a}_0 a_0 {a}_3 \bar{a}_3\bar{a}_0 a_0  
& = 0 , \\
\bar{a}_0 a_0 \bar{a}_2 a_2 {a}_3 \bar{a}_3 \bar{a}_2 a_2 \bar{a}_0 a_0 
& = 0 , \\
\bar{a}_2 a_2 \bar{a}_0 a_0 {a}_3 \bar{a}_3 \bar{a}_2 a_2 \bar{a}_0 a_0 
& = - \bar{a}_2 a_2 \bar{a}_0 a_0 \bar{a}_2 a_2 \bar{a}_2 a_2 \bar{a}_0 a_0
= \bar{a}_0 a_0 \bar{a}_2 a_2 \bar{a}_0 a_0 \bar{a}_2 a_2 \bar{a}_2 a_2 .
\end{align*}
Hence we obtain the
equalities
\begin{align*}
a'_3 \bar{a}'_3 
&= 
\Big({a}_3 
+ \theta_1 \bar{a}_0 a_0  {a}_3
+ \theta_3 \bar{a}_2 a_2  {a}_3
+ \big(\theta_4 + (\theta_3 - \theta_1)^2\big)  \bar{a}_0 a_0 \bar{a}_2 a_2  {a}_3
\\
&\qquad
+ \big(\theta_5 - 2 \theta_4 - 2(\theta_3 - \theta_1)^2\big) \bar{a}_2 a_2 \bar{a}_0 a_0  {a}_3
+ \big(
\theta_7
-8\theta_1\theta_3^2
\\ &\qquad \ \ 
+7\theta_1^2\theta_3
+2\theta_3\theta_4
-2\theta_1^3
-2\theta_1\theta_4
+3\theta_3^3   
\big) \bar{a}_0 a_0 \bar{a}_2 a_2 \bar{a}_0 a_0  {a}_3 \Big)
\\
&\quad
\ \cdot
\big(\bar{a}_3 + (\theta_3 - \theta_1) \bar{a}_3 \bar{a}_0 a_0
+ (\theta_4 - \theta_5 - \theta_1 \theta_3 + \theta_1^2) \bar{a}_3 \bar{a}_2 a_2 \bar{a}_0 a_0 \big) 
\\
&= {a}_3 \bar{a}_3 
+ \theta_1 \bar{a}_0 a_0 {a}_3 \bar{a}_3
+ \theta_3 \bar{a}_2 a_2 {a}_3 \bar{a}_3
+ (\theta_3 - \theta_1) {a}_3 \bar{a}_3 \bar{a}_0 a_0
\\& \quad
+ \theta_1 (\theta_3 - \theta_1) \bar{a}_0 a_0 {a}_3 \bar{a}_3 \bar{a}_0 a_0
+ \theta_3 (\theta_3 - \theta_1) \bar{a}_2 a_2 {a}_3 \bar{a}_3 \bar{a}_0 a_0
\\& \quad
+ \big(\theta_4 + (\theta_3 - \theta_1)^2\big)  \bar{a}_0 a_0 \bar{a}_2 a_2 {a}_3 \bar{a}_3
\\& \quad
+ \big(\theta_5 - 2 \theta_4 - 2(\theta_3 - \theta_1)^2\big) \bar{a}_2 a_2 \bar{a}_0 a_0  {a}_3 \bar{a}_3    
\\
& \quad
+ (\theta_4 - \theta_5 - \theta_1 \theta_3 + \theta_1^2) {a}_3 \bar{a}_3 \bar{a}_2 a_2 \bar{a}_0 a_0  
\\& \quad
+ \big(
\theta_7
-8\theta_1\theta_3^2
+7\theta_1^2\theta_3
+2\theta_3\theta_4
-2\theta_1^3
-2\theta_1\theta_4
+3\theta_3^3   
\big) \bar{a}_0 a_0 \bar{a}_2 a_2 \bar{a}_0 a_0  {a}_3  \bar{a}_3
\\& \quad
+ (\theta_3 - \theta_1) \big(\theta_4 + (\theta_3 - \theta_1)^2\big) 
\bar{a}_0 a_0 \bar{a}_2 a_2 {a}_3 \bar{a}_3 \bar{a}_0 a_0
\\& \quad
+ (\theta_3 - \theta_1) \big(\theta_5 - 2 \theta_4 - 2(\theta_3 - \theta_1)^2\big)
\bar{a}_2 a_2 \bar{a}_0 a_0  {a}_3 \bar{a}_3 \bar{a}_0 a_0    
\\& \quad
+ \theta_1 (\theta_4 - \theta_5 - \theta_1 \theta_3 + \theta_1^2)
\bar{a}_0 a_0 {a}_3 \bar{a}_3 \bar{a}_2 a_2 \bar{a}_0 a_0
\\& \quad
+ \theta_3 (\theta_4 - \theta_5 - \theta_1 \theta_3 + \theta_1^2)
\bar{a}_2 a_2 {a}_3 \bar{a}_3 \bar{a}_2 a_2 \bar{a}_0 a_0
\\& \quad
+ \big(\theta_5 - 2 \theta_4 - 2(\theta_3 - \theta_1)^2\big)
(\theta_4 - \theta_5 - \theta_1 \theta_3 + \theta_1^2) 
\bar{a}_2 a_2 \bar{a}_0 a_0 {a}_3 \bar{a}_3 \bar{a}_2 a_2 \bar{a}_0 a_0  
\\
&= {a}_3 \bar{a}_3 
- \theta_1 \bar{a}_0 a_0 \bar{a}_2 a_2
- \big((\theta_3 - \theta_1) + \theta_3\big) \bar{a}_2 a_2 \bar{a}_0 a_0
- \theta_3 \bar{a}_2 a_2 \bar{a}_2 a_2 
\\& \quad
- \big( \theta_1 (\theta_3 - \theta_1) 
- \theta_3 (\theta_3 - \theta_1)
+ \theta_4 + (\theta_3 - \theta_1)^2 \big)
\bar{a}_0 a_0 \bar{a}_2 a_2 \bar{a}_0 a_0
\\& \quad
- \big(\theta_4 + (\theta_3 - \theta_1)^2 
- \theta_3 (\theta_3 - \theta_1)
- (\theta_4 - \theta_5 - \theta_1 \theta_3 + \theta_1^2)
\big)  
\bar{a}_0 a_0 \bar{a}_2 a_2 \bar{a}_2 a_2
\end{align*}
\begin{align*}
\\& \quad
- \big(\theta_5 - 2 \theta_4 - 2(\theta_3 - \theta_1)^2
- \theta_3 (\theta_3 - \theta_1)
\\&\ \ \qquad
- (\theta_4 - \theta_5 - \theta_1 \theta_3 + \theta_1^2)
\big)  
\bar{a}_2 a_2 \bar{a}_0 a_0 \bar{a}_2 a_2  
\\& \quad
- \Big(\big(
\theta_7
-8\theta_1\theta_3^2
+7\theta_1^2\theta_3
+2\theta_3\theta_4
-2\theta_1^3
-2\theta_1\theta_4
+3\theta_3^3   
\big) 
\\
&\ \ \qquad
+ (\theta_3 - \theta_1) \big(\theta_5 - 2 \theta_4 - 2(\theta_3 - \theta_1)^2 
- \big(\theta_4 + (\theta_3 - \theta_1)^2\big)\big)
\\
&\ \ \qquad
+ (\theta_3 - \theta_1) (\theta_4 - \theta_5 - \theta_1 \theta_3 + \theta_1^2)
\Big)
\bar{a}_0 a_0 \bar{a}_2 a_2 \bar{a}_0 a_0 \bar{a}_2 a_2
\\& \quad
+ \big(3 \theta_4 \theta_5 - 2 \theta_4^2 - \theta_5^2 
+ \theta_4 (4\theta_3 \theta_1  - 2\theta_3^2 - 2\theta_1^2 
+ 2 \theta_1 \theta_3 - 2 \theta_1^2)
\\& \ \ \qquad
+ \theta_5 (2\theta_3^2 - 4\theta_3 \theta_1  + 2\theta_1^2 
- \theta_1 \theta_3 + \theta_1^2)
\\& \ \ \qquad
+ 2 \theta_1 (\theta_3 - \theta_1)^3
\big)
\bar{a}_0 a_0 \bar{a}_2 a_2 \bar{a}_0 a_0 \bar{a}_2 a_2 \bar{a}_2 a_2  
\\
&= {a}_3 \bar{a}_3 
- \theta_1 \bar{a}_0 a_0 \bar{a}_2 a_2
- (2\theta_3 - \theta_1) \bar{a}_2 a_2 \bar{a}_0 a_0
- \theta_3 \bar{a}_2 a_2 \bar{a}_2 a_2 
\\& \quad
- \theta_4 \bar{a}_0 a_0 \bar{a}_2 a_2 \bar{a}_0 a_0
- \theta_5 \bar{a}_0 a_0 \bar{a}_2 a_2 \bar{a}_2 a_2
\\& \quad
- \big(2 \theta_5 - 3 \theta_4 - 3(\theta_3 - \theta_1)^2\big)  
\bar{a}_2 a_2 \bar{a}_0 a_0 \bar{a}_2 a_2  
\\& \quad
- \Big(\big(
\theta_7
+ (\theta_3 - \theta_1) \big(2 \theta_4 + 3(\theta_3 - \theta_1)^2 
+ \theta_1 \theta_3 - \theta_1^2\big)
\big) 
\\
&\ \ \qquad
+ (\theta_1 - \theta_3) \big(2 \theta_4 + 3(\theta_3 - \theta_1)^2 
+ \theta_1 \theta_3 - \theta_1^2\big)
\Big)
\bar{a}_0 a_0 \bar{a}_2 a_2 \bar{a}_0 a_0 \bar{a}_2 a_2
\\& \quad
+ (3 \theta_4 \theta_5 - 2 \theta_4^2 - \theta_5^2 
+ 6 \theta_1 \theta_3 \theta_4 
- 4 \theta_1^2 \theta_4 
- 2\theta_3^2 \theta_4  
+ 
2\theta_3^2 \theta_5 
+ 3\theta_1^2 \theta_5 
\\& \ \qquad        
- 5 \theta_1 \theta_3 \theta_5
+ 2 \theta_1^4
- 6 \theta_1^3 \theta_3
+ 6 \theta_1^2 \theta_3^2
- 2 \theta_1 \theta_3^3
)
\bar{a}_0 a_0 \bar{a}_2 a_2 \bar{a}_0 a_0 \bar{a}_2 a_2 \bar{a}_2 a_2  
\\
&= {a}_3 \bar{a}_3 
- \theta_1 \bar{a}_0 a_0 \bar{a}_2 a_2
- \theta_2 \bar{a}_2 a_2 \bar{a}_0 a_0
- \theta_3 \bar{a}_2 a_2 \bar{a}_2 a_2 
- \theta_4 \bar{a}_0 a_0 \bar{a}_2 a_2 \bar{a}_0 a_0
\\& \quad
- \theta_5 \bar{a}_0 a_0 \bar{a}_2 a_2 \bar{a}_2 a_2
- \theta_6  
\bar{a}_2 a_2 \bar{a}_0 a_0 \bar{a}_2 a_2  
- \theta_7
\bar{a}_0 a_0 \bar{a}_2 a_2 \bar{a}_0 a_0 \bar{a}_2 a_2
\\& \quad
+ (3 \theta_4 \theta_5 - 2 \theta_4^2 - \theta_5^2 
+ 6 \theta_1 \theta_3 \theta_4 
- 4 \theta_1^2 \theta_4 
- 2\theta_3^2 \theta_4  
+ 
2\theta_3^2 \theta_5 
+ 3\theta_1^2 \theta_5 
\\& \ \qquad        
- 5 \theta_1 \theta_3 \theta_5
+ 2 \theta_1^4
- 6 \theta_1^3 \theta_3
+ 6 \theta_1^2 \theta_3^2
- 2 \theta_1 \theta_3^3
)
\bar{a}_0 a_0 \bar{a}_2 a_2 \bar{a}_0 a_0 \bar{a}_2 a_2 \bar{a}_2 a_2 . 
\end{align*}

Finally, observe that 
we have also
\begin{align*}
\bar{a}'_2 a'_2 &= \bar{a}_2 a_2 
- \theta_8 \bar{a}_2 a_2 \bar{a}_0 a_0 \bar{a}_2 a_2 \bar{a}_2 a_2 
\\ &\quad
+ 
\big(
2 \theta_1^4 
-6\theta_1^3\theta_3 
-3\theta_1^2\theta_5 
+4\theta_1^2\theta_4 
+6\theta_1^2\theta_3^2 
+5\theta_1\theta_3\theta_5 
-6\theta_1\theta_3\theta_4 
\\
&\quad\quad
+\theta_5^2
-3\theta_5\theta_4
+2\theta_4^2
-2\theta_3^3\theta_1
-2\theta_3^2\theta_5
+2\theta_3^2\theta_4
+2\theta_1\theta_8
-3\theta_3\theta_8 
\\
&\quad\quad
-\theta_9
\big)
\bar{a}_0 a_0 \bar{a}_2 a_2 \bar{a}_0 a_0 \bar{a}_2 a_2 \bar{a}_2 a_2 
\end{align*}
and
\begin{align*}
f(\bar{a}_0 a_0, &\bar{a}'_2 a'_2) 
= f(\bar{a}_0 a_0, \bar{a}_2 a_2)
- \theta_1 \theta_8 \bar{a}_0 a_0 \bar{a}_2 a_2 \bar{a}_0 a_0 \bar{a}_2 a_2 \bar{a}_2 a_2 
\\&\qquad
- \theta_2 \theta_8 \bar{a}_2  a_2 \bar{a}_0 a_0 \bar{a}_2 a_2 \bar{a}_2 a_2 \bar{a}_0 a_0
- \theta_3 \theta_8 \bar{a}_2 a_2 \bar{a}_2 a_2 \bar{a}_0 a_0 \bar{a}_2 a_2 \bar{a}_2 a_2 
\\&
= f(\bar{a}_0 a_0, \bar{a}_2 a_2)
+ (\theta_2 + \theta_3 - \theta_1) \theta_8 \bar{a}_0 a_0 \bar{a}_2 a_2 \bar{a}_0 a_0 \bar{a}_2 a_2 \bar{a}_2 a_2 
\\&
= f(\bar{a}_0 a_0, \bar{a}_2 a_2)
+ \big((2\theta_3 - \theta_1)+ \theta_3 - \theta_1\big) \theta_8 \bar{a}_0 a_0 \bar{a}_2 a_2 \bar{a}_0 a_0 \bar{a}_2 a_2 \bar{a}_2 a_2 
\\&
= f(\bar{a}_0 a_0, \bar{a}_2 a_2)
+ (3\theta_3 - 2\theta_1) \theta_8 \bar{a}_0 a_0 \bar{a}_2 a_2 \bar{a}_0 a_0 \bar{a}_2 a_2 \bar{a}_2 a_2 
.
\end{align*}

Summing up the above equations we obtain
\begin{align*}
\bar{a}_0 a_0 +
\bar{a}'_2 a'_2 +
a'_3 \bar{a}'_3 
+
f(
\bar{a}_0 a_0, \bar{a}'_2 a'_2
) 
=
\bar{a}_0 a_0 + \bar{a}_2 a_2 + a_3 \bar{a}_3 
= 0.
\end{align*}
Hence with these new generators 
$P(\EE_6)$ satisfies the relations of $P^f(\EE_6)$,
and consequently 
the algebras $P(\EE_6)$ and $P^f(\EE_6)$
are isomorphic.


\subsection*{Acknowledgements}
The author gratefully acknowledges support from the research grant
DEC-2011/02/A/ST1/00216 of the National Science Center Poland.

\end{document}